\newcommand{\R}{\mathbb{R}}
\renewcommand{\geq}{\geqslant}
\renewcommand{\leq}{\leqslant}
\newcommand{\Lin}{{\mathcal L}}
\newtheorem{theorem}{Theorem}[section]
\newtheorem{lemma}[theorem]{Lemma}
\newtheorem{proposition}[theorem]{Proposition}
\newtheorem{corollary}[theorem]{Corollary}
\newtheorem{question}[theorem]{Question}
\theoremstyle{definition}
\theoremstyle{remark}
\newtheorem{remark}[theorem]{Remark}
\numberwithin{equation}{section}
\def\fnote#1{\footnote}
\def\natu{{\mathbb N}}
\def\ignora#1{}
\def\n3#1{\left\vert \! \left\vert \! \left\vert \, #1 \, \right\vert \!
 \right\vert \! \right\vert }
\begin{document}

\title{Rank-one perturbations and norm-attaining operators}

\author[Jung]{Mingu Jung}
\address[Jung]{School of Mathematics, Korea Institute for Advanced Study, 02455 Seoul, Republic of Korea \newline
\href{http://orcid.org/0000-0003-2240-2855}{ORCID: \texttt{0000-0003-2240-2855} }}
\email{\texttt{jmingoo@kias.re.kr}}
\urladdr{\url{https://clemg.blog/}}

\author[Mart\'inez-Cervantes]{Gonzalo Mart\'inez-Cervantes}
\address[G. Mart\'inez-Cervantes]{Universidad de Alicante, Departamento de Matem\'{a}ticas, Facultad de Ciencias, 03080 Alicante, Spain
	\newline
	\href{http://orcid.org/0000-0002-5927-5215}{ORCID: \texttt{0000-0002-5927-5215} } }	
\email{gonzalo.martinez@ua.es}

\author[Rueda Zoca]{Abraham Rueda Zoca}
\address[Rueda Zoca]{Universidad de Granada, Facultad de Ciencias.
Departamento de An\'{a}lisis Matem\'{a}tico, 18071-Granada, Spain \newline
\href{https://orcid.org/0000-0003-0718-1353}{ORCID: \texttt{0000-0003-0718-1353} }} \email{abrahamrueda@ugr.es}
\urladdr{\url{https://arzenglish.wordpress.com}}

\subjclass[2020]{46B10, 46B20, 46B28}

\keywords {Norm-attaining operators; Compact perturbation; Reflexivity; Weak Maximizing Property; $V$-pairs}

\maketitle

\begin{abstract}

The main goal of this article is to show that for every (reflexive) infinite-dimensional Banach space $X$ there exists a reflexive Banach space $Y$ and $T, R \in \mathcal{L}(X,Y)$ such that $R$ is a rank-one operator, $\|T+R\|>\|T\|$ but $T+R$ does not attain its norm. This answers a question posed by S.~Dantas and the first two authors.
Furthermore, motivated by the parallelism exhibited in the literature between the $V$-property introduced by V.A.~Khatskevich, M.I.~Ostrovskii and V.S.~Shulman and the weak maximizing property introduced by R.M. Aron, D.~Garc\'ia, D. Pellegrino and E.V.~Teixeira, we also study the relationship between these two properties and norm-attaining perturbations of operators.

\end{abstract}

\section{Introduction}

Given real Banach spaces $X$ and $Y$, we denote by $\mathcal{L}(X,Y)$ (resp., $\mathcal{K}(X,Y)$) the space of all (bounded linear) operators (resp., compact operators) from $X$ into $Y$. When $X=Y$, we simply write $\mathcal{L}(X)$ (resp., $\mathcal{K}(X)$). As usual, the notations $B_X$ and $S_X$ stand for the closed unit ball of $X$ and the unit sphere of $X$, respectively.

A well-known result of J. Lindenstrauss states that if $X$ is reflexive then, for every Banach space $Y$, every operator $T\in \mathcal{L}(X,Y)$ can be approximated by norm-attaining operators. Recall that an operator $T \in \mathcal{L}(X,Y)$ is said to be \emph{norm-attaining} if $\|T\| = \|T(x)\|$ for some $x \in B_X$. Indeed, J. Lindenstrauss showed
that there exists a compact operator $K \in \mathcal{K}(X,Y)$ such that $\|K\|$ is arbitrarily small and $T+K$ attains its norm (see Theorem 1 and the succeeding remark in \cite{Lin1}), so every operator can be approximated by norm-attaining compact perturbations of itself.

Pairs of classical Banach spaces quite often satisfy a stronger condition. Namely, J.~Kover \cite{Kover} proved that for a Hilbert space $H$, if a compact perturbation of an operator $T \in \mathcal{L}(H)$ has norm strictly greater than the norm of $T$, then this perturbation attains its norm, i.e.~if $\|T+K\|>\|T\|$ where $K \in \mathcal{K}(H)$ then $T+K$ attains its norm.
%\textcolor{blue}{Let us mention that compact perturbations of bounded linear operators are closely related to the famous \emph{scalar-plus-compact problem} of Lindenstrauss (see, e.g., \cite{AH, Conwaybook, Lin}). }

Before going beyond in the exposition of literature, let us fix the following notation: a pair $(X,Y)$ of Banach spaces has the \emph{compact perturbation property} (for short, CPP) if for any $T \in \mathcal{L} (X,Y)$ and $K \in \mathcal{K} (X,Y)$ the inequality $\|T +K \| > \|T\|$ implies that $T+K$ is norm-attaining. Notice that the CPP of the pair $(X,Y)$ forces the domain space $X$ to be reflexive. 
We say that $X$ has the CPP if the pair $(X,X)$ has the CPP. With this notation in mind, Kover's result says nothing but that every Hilbert space has the CPP.

%To the best of our knownledge, the notion of CPP has not been previously defined in the literature of Functional Analysis. Nevertheless, this property has inspired the development of other Banach space properties that imply it.

A large class of pairs of Banach spaces enjoying the CPP is given by the class of $V$-pairs, which was introduced and intensively developed in the papers \cite{KOS,O}: a bounded linear operator $T\in \mathcal{L}(X,Y)$ is said to be a \textit{$V$-operator} if there is a norm-one operator $S \in \mathcal{L}(Y,X)$ such that the spectral radius of $TS$ coincides with $\|T\|$. If every operator in $\mathcal{L}(X,Y)$ is a $V$-operator, then the pair $(X,Y)$ is said to be a \textit{$V$-pair} or to have the \textit{$V$-property}. If $X=Y$, then $X$ is said to be a $V$-space or to have the $V$-property.
Among others, it is proved in \cite[Proposition 5]{KOS} that an operator having a strictly singular hump is a $V$-operator if and only if it is norm-attaining. Thus, every $V$-pair satisfies the CPP. Moreover, \cite[Theorem 1]{KOS} generalizes the aforementioned result of Kover to $\ell_p (X_n)$ for any sequence of finite dimensional spaces $(X_n)$ and $1<p<\infty$.

Quite recently R.M. Aron, D.~Garc\'ia, D. Pellegrino and E.V.~Teixeira \cite{AGPT} introduced another related property, the so-called {weak maximizing property}. A pair $(X,Y)$ of Banach spaces is said to have the \emph{weak maximizing property} (for short, WMP) if every operator from $X$ into $Y$ with a non-weakly null maximizing sequence is norm-attaining. Note from \cite[Proposition 2.2]{AGPT} (see also \cite[Theorem 1]{PelTeix}) that for $1<p<\infty$, $1\leq q <\infty$, and arbitrary index sets $\Gamma_1, \Gamma_2$, the pair $(\ell_p (\Gamma_1), \ell_q (\Gamma_2))$ has the WMP. Furthermore, the WMP implies the CPP \cite[Proposition 2.4]{AGPT}, which improves the previous results of Kover. Although there are pairs of Banach spaces with the CPP failing the WMP (see \cite[Proposition 3.6]{djm}), it seems to be an open problem whether the CPP for a pair of \emph{reflexive} Banach spaces implies the WMP \cite[Question 4.4]{djm}. For further results and more examples of pairs with the WMP we refer the reader to \cite{djm, gp}.

Due to the aforementioned results it is natural to wonder whether every reflexive Banach space $X$ has the CPP. Nevertheless, it is implicitly proved in \cite[Theorem 2]{O} that for every infinite-dimensional Banach space $X$ there exists an equivalent norm $||| \cdot |||$ and a rank-one operator $R:X \longrightarrow X$ with $||| I+R ||| > ||| I |||$ but with $I+R$ failing to attain its norm in $\mathcal{L}((X, ||| \cdot |||))$. In particular, $(X, ||| \cdot |||)$ fails the CPP (see Proposition \ref{prop:Ost}).

Note that if a Banach space $X$ is such that every operator in $\Lin (X)$ attains its norm, then $X$ clearly has the CPP. In other words, if $X$ fails the CPP, then there exists a non-norm attaining operator on $X$. By the result just mentioned in \cite{O}, we conclude that for any infinite-dimensional Banach space $X$, there exists a renorming $\widetilde{X}$ of $X$ for which not every operator in $\Lin(\widetilde{X})$ is norm-attaining. In fact, it seems to be an open problem whether there exists a reflexive infinite-dimensional Banach space $X$ such that every operator in $\mathcal{L}(X)$ attains its norm (see, for instance, \cite[Problem 8]{KOS}).

The main goal of this paper is to show that for every infinite-dimensional space $X$ there exists a reflexive Banach space $Y$ such that the pair $(X,Y)$ fails the CPP. Namely, the main theorem of the paper reads as follows.

\begin{theorem}\label{maintheorem}
	Let $X$ be an infinite-dimensional reflexive Banach space. Then there exists a reflexive Banach space $Y$ and $T,R\in \mathcal{L}(X,Y)$ with $R$ a rank-one operator such that $\|T+R\|>\|T\|$ but with $T+R$ not attaining its norm. In particular, the pair $(X,Y)$ fails the CPP.
\end{theorem}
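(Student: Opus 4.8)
The plan is to construct, for a given infinite-dimensional reflexive space $X$, a target space $Y$ together with operators $T$ and $R$ (with $R$ rank-one) so that $T+R$ just barely fails to attain its norm while having norm strictly larger than $\|T\|$. The natural strategy is to build $Y$ so that norm-attainment of $T+R$ would force a maximizing sequence to converge (along a subsequence) to an actual maximizer, and then to rig the construction so that the only candidate maximizing sequence is weakly null with no norm-attaining limit. Since $X$ is reflexive and infinite-dimensional, its unit ball is weakly compact but not norm compact, and one can extract a normalized weakly null sequence $(x_n)$ in $X$ (by, e.g., the fact that infinite-dimensional reflexive spaces contain such sequences via Rosenthal/Eberlein--Šmulian arguments). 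This sequence will be the engine of the counterexample.

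**The key steps, in order**, are as follows. First I would fix a normalized weakly null sequence $(x_n)\subset S_X$ and corresponding norming functionals $(x_n^*)\subset S_{X^*}$ with $x_n^*(x_n)=1$. Second, I would design $Y$ as an $\ell_p$-type or tree-like direct sum of copies of scalars (or of $X$-related pieces) indexed so that an operator can ``spread'' the weakly null sequence across infinitely many orthogonal-like coordinates; the reflexivity of $Y$ is guaranteed by taking an $\ell_p$-sum with $1<p<\infty$ of reflexive building blocks. Third, I would define $T$ to send $x_n$ into the $n$-th coordinate block with carefully chosen weights $\lambda_n \uparrow 1$ (so $\|T\|=\sup_n \lambda_n = 1$ is not attained by $T$ itself along this sequence, mimicking the classical weighted-shift phenomenon), and let $R$ be a rank-one operator, say $R = z^*\otimes y_0$ for a suitable functional $z^*\in X^*$ and vector $y_0\in Y$, chosen so that adding $R$ strictly increases the norm. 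Fourth, I would verify the two quantitative requirements: $\|T+R\|>\|T\|$, which should follow by testing $T+R$ on an appropriate vector (or combination) where $R$ contributes constructively, and non-attainment, which requires showing that every maximizing sequence for $T+R$ must be weakly null and that the weak limit of its image is strictly sub-maximal in norm.

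**I would also lean on the structural guidance** already present in the paper: the result of Ostrovskii quoted from \cite{O} produces, for any infinite-dimensional $X$, a renorming and a rank-one $R$ with $|||I+R|||>|||I|||$ and $I+R$ not norm-attaining. The natural approach is therefore to absorb the renorming into the choice of the \emph{target} space $Y$ rather than re-norming $X$ itself: one takes $Y$ to be a reflexive space into which $X$ embeds via an operator $T$ that realizes the offending norm, and transplants Ostrovskii's rank-one perturbation into $\mathcal{L}(X,Y)$. Concretely, I expect to embed $X$ into a reflexive $Y$ (for instance $Y = X \oplus_p \mathbb{R}$ or an $\ell_p$-sum encoding the weighted structure) and let $R$ act through the one extra dimension, so that $T+R$ inherits the non-attainment from the weighted-shift mechanism while $Y$ stays reflexive. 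Checking that $Y$ is genuinely reflexive and that $T+R$ fails attainment \emph{globally} (not merely along the chosen sequence) is where care is needed.

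**The hard part will be** the non-attainment argument: one must rule out \emph{all} potential maximizers in $B_X$, not just the distinguished sequence. Because $X$ is reflexive, any maximizing sequence has a weakly convergent subsequence with weak limit $x_\infty\in B_X$; the danger is that $\|(T+R)x_\infty\|$ equals $\|T+R\|$, giving attainment. The construction must be arranged so that the norm of $T+R$ is strictly lower semicontinuous-deficient along weak limits in exactly the right way—i.e., the weakly null ``spreading'' component of $T$ loses all its mass in the weak limit while the rank-one part $R$ alone cannot reach the full norm. Quantitatively, this amounts to a strict inequality of the form $\|(T+R)x_\infty\| \le \max(\|Tx_\infty\|, \text{(contribution of }R)) < \|T+R\|$ for every weak limit point, which is precisely the delicate inequality to be established. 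I expect verifying this strict gap—balancing the weights $\lambda_n$, the norm of $R$, and the exponent $p$—to be the crux of the proof.
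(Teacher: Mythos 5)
Your proposal is an outline rather than a proof, and the two steps you defer are precisely where your chosen route breaks down. First, the target design: you propose an $\ell_p$-sum (or $c_0$-like) reflexive $Y$ into which $T$ ``spreads'' a weakly null sequence with weights $\lambda_n \uparrow 1$. For the map $x \mapsto (\lambda_n x_n^*(x))_n$ to land in an $\ell_p$-sum you need summability of $(x_n^*(x))$ for every $x$, which a general reflexive $X$ does not provide; and if you retreat to coordinates tending to zero (a $c_0$-valued map), the construction is doomed in principle, since $(X,c_0)$ has the WMP and hence the CPP for every reflexive $X$ (\cite[Proposition 3.6]{djm}, quoted in Remark \ref{remark:c}). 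The paper's construction shows what is actually needed: the coordinate functionals must \emph{not} be weak$^*$-null but converge weak$^*$ to a functional $x_0^*$ of norm $1$ strictly below the operator norm $L>1$, and the rank-one direction is the vector $(\alpha_k)_k$ with $\alpha_k \to 1$, which lies in $\ell_\infty \setminus c_0$ --- so the natural home of the construction is $\ell_\infty$, not a reflexive sum. Getting $L>1$ in the first place is itself nontrivial: the paper takes $x_0$ a point of Fr\'echet differentiability (available by reflexivity), uses \v{S}mulyan's lemma to force $\sup_n \|x_0 + \epsilon x_n\| < 2$ for some sign $\epsilon$, and uses Josefson--Nissenzweig to produce the functionals; none of this machinery, nor any substitute for it, appears in your sketch, and your concluding inequality $\|(T+R)x_\infty\| < \|T+R\|$ for all weak limit points is exactly the statement you would have to prove but do not.

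Second, your fallback ideas do not repair this. Transplanting Ostrovskii's renorming into the target fails because operator norm and norm-attainment are computed over the unit ball of the \emph{domain}: the renormed identity $I+R$ on $\widetilde{X}$ is non-attaining with respect to $B_{\widetilde{X}}$, and replacing the domain ball by the original $B_X$ changes both the supremum and the maximizing sequences, so non-attainment does not transfer to a map $X \to \widetilde{X}$. Similarly, $Y = X \oplus_p \mathbb{R}$ with $R$ acting through the extra coordinate mimics Proposition \ref{prop:djm}, but that argument modifies the \emph{domain} to $X \oplus_p \mathbb{R}$ (the exact norm computation in \cite[Lemma 2.1]{djm} needs the two components to act on separate coordinates of the domain), whereas the theorem requires the domain to remain the given $X$. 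The paper's actual solution to the reflexivity problem is a trick absent from your plan: having built a non-attaining $T = S + R$ into $\ell_\infty$ with $\|S\| = 1 < \|S+R\|$, one passes to $\widetilde{T}(x) = (T(x),x)$ and $\widetilde{K}(x) = (K(x),0)$ in $\ell_\infty \oplus_\infty X$, notes that $\widetilde{T}$ is an isomorphic embedding so $\widetilde{T}(X)$ is reflexive, and takes $Y$ to be the closed span of $\widetilde{T}(X) \cup \widetilde{K}(X)$ --- a reflexive subspace of a non-reflexive superspace. Reflexivity of the target is thus \emph{recovered a posteriori from the range}, not imposed a priori by choosing a reflexive sum; without this idea (or an equivalent), your plan has no way to reconcile the $\ell_\infty$-type mechanism that non-attainment requires with the reflexivity of $Y$ that the theorem demands.
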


%
%
%Section \ref{section:main} is devoted to prove Theorem \ref{maintheorem}. To this aim, we will need to prove first that the theorem holds for $Y=\ell_\infty$ (see Theorem \ref{thm:main}).

%The above theorem provides a positive answer to \cite[Question 4.3]{djm}. 
The aim of Section \ref{section:main} is to prove Theorem \ref{maintheorem}. Our original motivation for Theorem \ref{maintheorem} comes from the study of the WMP and the $V$-property. There is a deep parallelism exhibited in the literature between these two properties; compare \cite{djm, gp} to \cite{KOS, O}. However, up to our knowledge, it is not known how these two concepts are related to each other and quite often if a question is open for one property it is also open for the other. In \cite[Question 4.3]{djm} it is asked whether if a reflexive Banach space $X$ satisfies that the pair $(X,Y)$ has the WMP for every Banach space $Y$, then $X$ must be finite-dimensional. We would like to mention that, to the best of our knowledge, the same question was open if we replace the WMP with the CPP.

In particular, Theorem \ref{maintheorem} gives a positive answer to \cite[Question 4.3]{djm} and exhibits another common behaviour between the WMP and the $V$-property.

Observe that the WMP and the $V$-property implies the CPP. We know that the WMP (and therefore the CPP) does not imply the $V$-property (the pair $(\ell_p, \ell_q)$ with $1<p<q<\infty$ has the WMP while it is not a $V$-pair). Nevertheless, we do not know whether the $V$-property implies the WMP. This situation also  serves as motivation for our investigation of all the aforementioned properties.
In Section \ref{sec:relationproperties}, we first observe from the argument of Ostrovskii \cite{O} that the CPP is still a quite restrictive property: an infinite-dimensional Banach space $X$ with the CPP must be isometric to $\ell_p$ for some $1<p <\infty$ if $X$ has a symmetric basis (Proposition \ref{prop:Ost}). 
Moreover, we see that the existence of non-norm-attaining operators between Banach spaces $X$ and $Y$ produces pairs of Banach spaces failing the CPP (Proposition \ref{prop:djm}) and, as a consequence, the pair $(L_p[0,1], L_q [0,1])$ fails the CPP whenever $p>2$ or $q<2$ (Corollary \ref{cor:LpLq}). We also generalize in Proposition \ref{prop:generalizationWMP} the fact that the pair $(\ell_p,\ell_q)$ has the WMP for $1<p<\infty$ and $1\leq q <\infty$. Finally, we prove that a pair $(X,Y)$ has the $V$-property if and only if every operator from $X$ into $Y$ is norm-attaining provided that $Y$ has the Dunford-Pettis property (Proposition \ref{prop:DPP}), which covers some results in previous sections.

\section{Proof of Theorem \ref{maintheorem}}\label{section:main}

As we have indicated in the introduction, the aim of this section is to prove Theorem \ref{maintheorem}, for which we will start by considering the Banach space $\ell_\infty$ of all bounded sequences instead of reflexive ones.

\begin{theorem}\label{thm:main}
Let $X$ be a reflexive Banach space. 
If the pair $(X,\ell_\infty)$ has the CPP, then $X$ is finite-dimensional. Namely, if $X$ is infinite-dimensional then there exists $S,R \in \mathcal{L}(X,\ell_\infty)$ with $R$ a rank-one operator such that $\|S+R\|>\|S\|$ but with $S+R$ not attaining its norm.
\end{theorem}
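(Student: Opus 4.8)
The plan is to use the standard identification of operators into $\ell_\infty$ with bounded sequences of functionals: an operator $U\in\mathcal L(X,\ell_\infty)$ has the form $U(x)=(U_n(x))_n$ for a bounded sequence $(U_n)\subseteq X^*$, and $\|U\|=\sup_n\|U_n\|$; moreover $U$ attains its norm at $x\in B_X$ precisely when $\sup_n|U_n(x)|=\|U\|$. The non-attainment mechanism I will exploit is the following. Suppose the components converge weakly, $U_n\xrightarrow{w}\psi$ in $X^*$ (equivalently, since $X$ is reflexive, $U_n(x)\to\psi(x)$ for every $x\in X$), with $\|\psi\|<\|U\|=\sup_n\|U_n\|$ and $\|U_n\|<\|U\|$ for every $n$. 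Then for each fixed $x\in B_X$ the scalars $U_n(x)$ converge to $\psi(x)$, with $|\psi(x)|\le\|\psi\|<\|U\|$ and each $|U_n(x)|\le\|U_n\|<\|U\|$; hence $\sup_n|U_n(x)|<\|U\|$ and $U$ does not attain its norm. My goal is therefore to produce $T:=S+R$ whose components $T_n$ converge weakly to a functional of strictly smaller norm and whose norms increase to $\|T\|$ without ever reaching it, in such a way that a single rank-one piece $R$ can be stripped off to decrease the norm strictly.

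The whole construction rests on the following boost lemma, which I regard as the main obstacle: \emph{there exist $\phi\in S_{X^*}$ and a weakly null sequence $(h_n)$ in $S_{X^*}$ with $\liminf_n\|\phi+h_n\|>1$.} The point is that the reflexivity of $X$ forces $X^*$ to be reflexive, hence to contain no isomorphic copy of $c_0$; a purely flat asymptotic behaviour (in which $\|\phi+h\|$ stays arbitrarily close to $1$ as $h$ ranges over the unit spheres of finite-codimensional subspaces, for every unit $\phi$) is exactly what characterises an asymptotically $c_0$ norm, and would let one build a normalised sequence equivalent to the $c_0$-basis. I would prove the lemma by contradiction: assuming $\liminf_n\|\phi+h_n\|=1$ for every unit $\phi$ and every normalised weakly null $(h_n)$, I construct inductively a normalised basic sequence $(v_k)$, choosing $v_{k+1}$ inside the finite-codimensional intersection of the kernels of functionals norming the finitely many relevant combinations of $v_1,\dots,v_k$, and use the flatness hypothesis to keep $\|\sum_k a_k v_k\|$ comparable to $\sup_k|a_k|$. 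This produces a copy of $c_0$ inside $X^*$ (via James' theorem, or the Bessaga--Pe\l czy\'nski criterion), contradicting reflexivity. Controlling all coefficient patterns simultaneously at each finite stage is the delicate technical point here.

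Granting the lemma, I assemble the example as follows. Passing to a subsequence I may assume $\|\phi+h_n\|\to M$ for some $M>1$. I set the scalar coefficients $\mu_n:=(M-2^{-n})/\|\phi+h_n\|$; since $\|\phi+h_n\|\to M$ these satisfy $0<\mu_n<1$ for all large $n$ (after discarding finitely many terms) and $\mu_n\to1$. I then define the component functionals $T_n:=\mu_n(\phi+h_n)$, so that $\|T_n\|=M-2^{-n}$ is strictly increasing to $M$ and strictly below $M$ for every $n$. Writing $y:=(\mu_n)_n\in\ell_\infty$, I let
\[
R(x):=\phi(x)\,y=(\mu_n\phi(x))_n,\qquad S(x):=(\mu_n h_n(x))_n,
\]
so that $R$ is a rank-one operator, $S+R$ has components $T_n$, and both map into $\ell_\infty$.

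Finally I verify the three requirements. First, $\|S\|=\sup_n\mu_n\|h_n\|=\sup_n\mu_n\le1$, whereas $\|S+R\|=\sup_n\|T_n\|=M>1\ge\|S\|$, so $\|S+R\|>\|S\|$. Second, $S+R$ does not attain its norm: its components $T_n=\mu_n(\phi+h_n)$ converge weakly to $\phi$ (because $\mu_n\to1$ and $h_n\xrightarrow{w}0$), with $\|\phi\|=1<M=\|S+R\|$ and $\|T_n\|<M$ for every $n$, so the mechanism of the first paragraph applies; concretely, for every $x\in B_X$ one has $T_n(x)=\mu_n(\phi(x)+h_n(x))\to\phi(x)$ with $|\phi(x)|\le1<M$ while each $|T_n(x)|\le M-2^{-n}<M$, whence $\sup_n|T_n(x)|<M$. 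Thus $(S,R)$ is the desired pair with $R$ rank-one, which establishes the second assertion of the statement and hence, by contraposition, the first.
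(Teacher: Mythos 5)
Your proposal follows the same overall architecture as the paper's proof, and its second half is essentially the paper's own assembly: the paper likewise scales a sequence of functionals by coefficients increasing to $1$ so that the component norms increase strictly to $\|T\|$ without reaching it, uses exactly your weak$^*$-convergence mechanism (the components converge weak$^*$ to a functional of norm $1<\|T\|$, so no $x\in B_X$ can be a maximizer) to get non-attainment, and splits off the rank-one part $\phi\otimes(\text{scalars})$ in the same way. Where you genuinely diverge is in the proof of the key ``boost'' lemma, i.e.\ the existence of $\phi\in S_{X^*}$ and a weakly null $(h_n)$ in $S_{X^*}$ with $\liminf_n\|\phi+h_n\|>1$. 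The paper obtains this constructively: it takes a point $x_0$ of Fr\'echet differentiability of the norm (available because reflexive spaces are Asplund), a weak$^*$-null sequence of norm-one functionals via the Josefson--Nissenzweig theorem, and a \v{S}mulyan-type lemma (Lemma \ref{lem:F-point}) guaranteeing $\sup_n\|x_0+\epsilon x_n\|<2$ for a suitable sign $\epsilon$, which forces $\limsup_n\|g_n\|>1$ for $g_n\to x_0^*$ weak$^*$. You instead argue by contradiction: failure of the boost lemma makes the norm of $X^*$ asymptotically flat, and flatness yields a copy of $c_0$ inside the reflexive space $X^*$. That dichotomy is classical (it goes back to Milman \cite{Milman} and underlies the moduli computations in \cite{JLPS}, both cited in the paper), so your route is viable; it trades the differentiability/Asplund machinery for the asymptotic-structure machinery, and it even avoids Josefson--Nissenzweig (in a reflexive $X^*$ a normalized weakly null sequence can be extracted from any normalized basic sequence).

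Two repairs are needed, both routine. First, in your $c_0$-construction you propose choosing $v_{k+1}$ in the intersection of kernels of norming functionals, but your flatness hypothesis concerns weakly null \emph{sequences}, not unit vectors of finite-codimensional subspaces, and a vector chosen from such kernels need not be a term of any weakly null sequence; the clean fix is to fix one normalized weakly null basic sequence $(e_m)$ at the outset and take the $v_k$ to be far-out terms $e_{m_k}$, after upgrading your hypothesis from $\liminf=1$ to $\lim=1$ (apply it to subsequences realizing the limsup) and extending it by convexity to perturbations with $\|h_n\|\neq\|\phi\|$, so that $\limsup_m\|\phi+a e_m\|\le\max\{\|\phi\|,|a|\}$ for every $\phi\neq0$; the basis constant of $(e_m)$ then supplies the lower $c_0$-estimate (Bessaga--Pe\l czy\'nski is the right citation here, not James). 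Second, your claim that $0<\mu_n<1$ holds ``for all large $n$'' is false as stated: $\|\phi+h_n\|\to M$ does not imply $\|\phi+h_n\|>M-2^{-n}$ eventually (take $\|\phi+h_n\|=M-1/n$), and $\mu_n\le1$ is what makes $\|S\|\le1$. Fix it either by passing to a further subsequence $(h_{n_k})$ with $\|\phi+h_{n_k}\|>M-2^{-k}$ and re-indexing, or by setting $\mu_n:=\min\bigl\{1,(M-2^{-n})/\|\phi+h_n\|\bigr\}$, which still gives $\mu_n\to1$, $\|T_n\|\le M-2^{-n}<M$ and $\sup_n\|T_n\|=M$. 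With these two adjustments your argument is complete.
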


For the proof we need a lemma of geometric nature for points of Fr\'echet differentiability. Let us recall that a point $x\in S_X$ is said to be a \textit{point of Fr\'echet-differentiability of $X$} if the norm $\Vert\cdot\Vert: X\longrightarrow \mathbb R$ is Fr\'echet differentiable at $x$. See \cite[Chapter 8]{checos} for background about Fr\'echet differentiability in Banach spaces.

\begin{lemma}\label{lem:F-point}
Let $X$ be a Banach space. Let $x\in S_X$ be a point of Fr\'echet differentiability of $X$ and $(x_n)$ be a sequence of points of $B_X$ such that $\|x -x_n\| \rightarrow 2$. Then $\limsup\limits_{n \rightarrow \infty} \|x+x_n\| < 2$. 
\end{lemma}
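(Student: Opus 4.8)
The plan is to argue by contradiction and to reduce everything to the Šmulyan characterization of Fréchet differentiability (see \cite[Chapter 8]{checos}): since $x$ is a point of Fréchet differentiability of the norm, there is a unique $f\in S_{X^*}$ with $f(x)=1$ (namely the Fréchet derivative of $\|\cdot\|$ at $x$), and moreover \emph{every} sequence $(h_n)\subset B_{X^*}$ with $h_n(x)\to 1$ must satisfy $\|h_n-f\|\to 0$. This norm-convergence of supporting functionals is exactly the extra strength that Fréchet (rather than mere Gateaux) differentiability provides, and it will be the engine of the proof.

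So suppose the conclusion fails. Then, passing to a subsequence, I may assume $\|x+x_n\|\to 2$ in addition to the hypothesis $\|x-x_n\|\to 2$. For each $n$, using Hahn--Banach I would choose norm-attaining functionals $f_n,g_n\in S_{X^*}$ with
\[
f_n(x+x_n)=\|x+x_n\|,\qquad g_n(x-x_n)=\|x-x_n\|.
\]
Writing $f_n(x)+f_n(x_n)=\|x+x_n\|\to 2$ and observing that both $f_n(x)\le 1$ and $f_n(x_n)\le\|x_n\|\le 1$, I conclude that each summand must tend to its maximum: $f_n(x)\to 1$ and $f_n(x_n)\to 1$. The same bookkeeping applied to $g_n(x)-g_n(x_n)=\|x-x_n\|\to 2$ yields $g_n(x)\to 1$ and $g_n(x_n)\to -1$.

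Now I would feed $(f_n)$ and $(g_n)$ into the Šmulyan criterion. Both sequences lie in $B_{X^*}$ and satisfy $f_n(x)\to 1$ and $g_n(x)\to 1$, so Fréchet differentiability at $x$ forces $\|f_n-f\|\to 0$ and $\|g_n-f\|\to 0$, hence $\|f_n-g_n\|\to 0$. On the other hand,
\[
\bigl|f_n(x_n)-g_n(x_n)\bigr|\le \|f_n-g_n\|\,\|x_n\|\le \|f_n-g_n\|\longrightarrow 0,
\]
whereas the limits computed above give $f_n(x_n)-g_n(x_n)\to 1-(-1)=2$. This is the desired contradiction, and it establishes $\limsup_n\|x+x_n\|<2$.

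The argument is short, and I do not expect a genuine obstacle beyond correctly invoking the norm-convergence half of Šmulyan's lemma; the only point demanding a little care is the splitting step, where one must justify that $\|x+x_n\|\to 2$ (and $\|x-x_n\|\to 2$) pins down all four inner products $f_n(x),f_n(x_n),g_n(x),g_n(x_n)$ at their extreme values. It is worth noting that Fréchet differentiability is essential here rather than cosmetic: mere Gateaux smoothness would only give weak$^*$-convergence of $(f_n)$ and $(g_n)$, which does not control $\|f_n-g_n\|$ and hence cannot produce the contradiction.
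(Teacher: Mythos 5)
Your proof is correct and follows essentially the same route as the paper: both argue by contradiction via the \v{S}mulyan lemma, extracting functionals that nearly norm $x+x_n$ and $x-x_n$, deducing that both families must converge in norm to the Fr\'echet derivative $f$ while their values at $x_n$ stay separated by nearly $2$. The only cosmetic difference is that you use exact Hahn--Banach supporting functionals and the sequential form of \v{S}mulyan, whereas the paper uses $\delta$-approximately norming functionals and the $\varepsilon$--$\delta$ formulation.
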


\begin{proof}
Since $x$ is a point of Fr\'echet differentiability, by \v{S}mulyan Lemma \cite[Lemma 8.4]{checos} there exists $f\in S_{X^*}$ with $f(x)=1$ and with the following property: for every $\varepsilon>0$ there exists $\delta>0$ such that if $g\in B_{X^*}$ and $g(x)>1-\delta$ then $\Vert g-f\Vert<\varepsilon$.

Assume to the contrary that $\limsup\limits_{n \rightarrow \infty} \| x + x_n \| =2$. Take a subsequence $(x_{n_k})$ such that $\| x + x_{n_k} \| \rightarrow 2$. 
 
Fix $\varepsilon>0$ and take the $\delta >0$ associated to $f$ above, which we can assume to satisfy $\varepsilon< 1-\delta$ (observe that if $\delta$ satisfies the above condition, any $\delta'<\delta$ will also satisfy the condition).

As $\| x - x_{n_k}\| \rightarrow 2$, we can pick $k \in \mathbb{N}$ such that $\| x \pm x_{n_k} \| > 2 - \delta$. 
Find $f^\pm\in B_{X^*}$ such that $f^\pm (x\pm x_{n_k})>2-\delta$. This implies that $f^\pm (x)>1-\delta$ and $\pm f^\pm(x_{n_k})>1-\delta$. 

Observe, on the one hand, that $\Vert f^+-f^-\Vert\geq (f^+-f^-)(x_{n_k})>2(1-\delta)$. On the other hand, Smulyan test implies that $\Vert f^+-f^-\Vert< 2 \varepsilon$, so $1-\delta<\varepsilon$, a contradiction. 
\end{proof}

\begin{proof}[Proof of Theorem \ref{thm:main}]
	We suppose by contradiction that the Banach space $X$ is infinite-dimensional. Let $x_0 \in S_X$ be a point of Fr\'echet differentiability of $B_X$ (such point exists because of the reflexivity of $X$ \cite[Corollary 11.10]{checos}) and let $x_0^* \in S_{X^*}$ such that $x_0^* (x_0)=1$. 
	
	Set $Y:=\ker(x_0^*)$. Since $Y$ is $1$-codimensional we get that $Y$ is infinite-dimensional, so Josefson-Nissenzweig Theorem guarantees the existence of a weak$^*$-null sequence $(y_n^*)$ in $ S_{Y^*}$. 
	Set a norm-one extension $x_n^*\in S_{X^*}$ of $y_n^*$ given by Hahn-Banach Theorem for every $n\in \natu$. Since $y_n^*\in S_{Y^*}$ we can find $x_n\in S_Y$ such that $y_n^*(x_n)=1$. In particular, $x_n^*(x_n)=y_n^*(x_n)=1$ for every $n\in\mathbb N$.
	
	If $\| x_0 - x_n \| \rightarrow 2$, then using Lemma \ref{lem:F-point}, we have $\limsup\limits_{n \rightarrow \infty} \| x_0+ x_n \| < 2$. Otherwise, passing to a subsequence, we may assume that $\sup\limits_{n \in \mathbb{N}} \|x_0 - x_n\| < 2$. Thus, in any case, we may assume by passing to a subsequence that either $\sup\limits_{n \in \mathbb{N}} \| x_0 + x_{n}\| < 2$ or $\sup\limits_{n \in \mathbb{N}} \| x_0 - x_{n} \| < 2$. Set $\epsilon\in \{-1,+1\}$ such that  $\sup\limits_{n \in \mathbb{N}} \| x_0 + \epsilon x_{n} \| < 2$.

	Passing again to a further subsequence if necessary, we can suppose that $(x_n^*(x_0))$ converges to some $\alpha \in \R$. Since every element of $X$ is of the form $ax_0+y$ with $y\in Y=\ker(x_0^*)$, it follows that $(x_n^*-\alpha x_0 ^*)$ is weak$^*$-null.

	%We claim that $|\alpha|<1$. Namely, since $\|x_n^* \pm x_0^*\| \geq (x_n^* \pm x_0^*)(x_n)=1$ for every $n\in \natu$, we have that $\inf_n \{\|x_n^*\pm x_0^* \|\} \geq 1$. Then, since $x_0$ is a point of Fr\'echet differentiability of $B_X$, we have that $\inf_n \{(x_n^*\pm x_0^*)(x_0)\} >0$, which in turn implies that $|\alpha|<1$. Furthermore, changing the signs of $x_n^*$ if necessary, we can suppose that $\epsilon \alpha \geq 0$. 

	Define now $g_n:=(1-\epsilon\alpha)x_0^*+\epsilon x_n^*$ for every $n\in\mathbb N$. Observe that 
	\[
	\lim\limits_{n \rightarrow \infty} g_n (x_0) = \lim\limits_{n \rightarrow \infty} (1-\epsilon\alpha)x_0^*(x_0)+\epsilon x_n^*(x_0)=1-\epsilon\alpha+\epsilon\alpha=1. 
	\]
	Moreover, since $x_n\in Y=\ker(x_0^*)$, we have $x_0^*(x_n)=0$ for every $n\in\mathbb N$ and therefore   
	\[
	\lim\limits_{n \rightarrow \infty} g_n (\epsilon x_n) =\lim\limits_{n \rightarrow \infty} ( \epsilon(1-\epsilon\alpha) x_0^* (x_n)+\epsilon^2 x_n^*(x_n))=1.
	\]

	Thus,  
	\[
	L:= \limsup\limits_{n \rightarrow \infty} \Vert g_n \Vert\geq \frac{ \lim\limits_{n \rightarrow \infty} g_n (x_0 + \epsilon x_n) }{ \sup_{n \in \mathbb{N}} \Vert x_0+ \epsilon x_n \Vert} = \frac{2}{ \sup_{n \in \mathbb{N}} \Vert x_0+ \epsilon x_n \Vert} >1. 
	\]

	For every $n\in\mathbb N$ take $v_n\in S_{X}$ such that $g_n(v_n)>\Vert g_n\Vert-\frac{1}{n}$. Take a subsequence $(g_{n_k})$ with $\| g_{n_k}\| \rightarrow L$. Note that $g_{n_k}(v_{n_k})\rightarrow L$ as $k \rightarrow \infty$. Passing to a further subsequence, we may assume that $| g_{n_k} (v_{n_k}) - L | < \frac{1}{k}$ for every $k \in \mathbb{N}$. 
	Notice that for each $k \in \mathbb{N}$ 
	\[
	L - \frac{1}{k} < g_{n_k} (v_{n_k}) \leq \| g_{n_k} \| \leq g_{n_k} (v_{n_k}) + \frac{1}{n_k} \leq g_{n_k} (v_{n_k}) + \frac{1}{k} < L + \frac{2}{k}. 
	\]
	This, in particular, shows that $\|g_{n_k}\| \neq 0$. Put $\alpha_k := \|g_{n_k} \|^{-1} (L - \frac{1}{k})$ for every $k \in \mathbb{N}$; then $\alpha_k \in (0,1)$ and $\alpha_k \rightarrow 1$ as $k \rightarrow \infty$.

	Define $T \in \Lin (X, \ell_\infty)$ by the equation
	$$T(x):=(\alpha_k g_{n_k} (x))_{k\in \mathbb N}$$ 
	and observe that $\Vert T\Vert\leq L$ because $\alpha_k \|g_{n_k}\| = L- \frac{1}{k} < L$ for every $k \in \mathbb{N}$.
	On the other hand, observe that $\Vert T(v_{n_k})\Vert\geq \alpha_{k}g_{n_k}(v_{n_k})>\alpha_{k}( L-\frac{1}{k})$, which implies that $\Vert T\Vert=L$.

	We claim that $T$ does not attain its norm. Assume to the contrary that there exists $u_0 \in S_X$ such that $\Vert T(u_0 )\Vert=\sup\limits_{k\in \mathbb N} \vert \alpha_k g_{n_k} (u_0 )\vert = L$. 
	Observe that 
	\[
	g_n=(1-\epsilon\alpha)x_0^*+\epsilon x_n^*=x_0^*+\epsilon (x_n^*-\alpha x_0^*) \rightarrow x_0^* 
	\]
	in the weak$^*$-topology since $(x_n^*-\alpha x_0^*)$ is weak$^*$-null. Consequently $g_n(u_0)\rightarrow x_0^* (u_0)$ and, since $\vert x_0^* (u_0)\vert\leq 1$,  we can find $k_0 \in\mathbb N$ such that $\vert g_{n_k} (u_0)\vert< \frac{L+1}{2}<L$ for $k\geq k_0$.
	This would imply that $\|T \| = \max\limits_{1 \leq k \leq k_0 -1} \{ \alpha_k \| g_{n_k} (u_0)\| \} = L$. However, $\alpha_k \| g_{n_k} (u_0) \| \leq \alpha_k \|g_{n_k}\| = L - \frac{1}{k} < L$ for each fixed $k$, which leads to a contradiction.

	Now notice that $$T(x):=(\alpha_k g_{n_k} (x))_{k\in \mathbb N}=(\alpha_k(1-\epsilon\alpha)x_0^*(x)+\epsilon\alpha_k x_n^*(x))_{k\in \mathbb N}=R(x)+S(x),$$
	where $S(x)=(\epsilon \alpha_k x_n^*(x))_{k\in \mathbb N}$ and $R(x) =(1-\epsilon\alpha)x_0^*(x) (\alpha_k)_{k\in \mathbb N}$ is a rank-one operator 
	and $$\|S\|=1 < \|S+R\|=\|T\|=L ,$$
	which concludes the proof.
\end{proof}

\begin{remark}\label{remark:c}
As a consequence of Theorem \ref{thm:main}, if $X$ is an infinite-dimensional reflexive Banach space, then the pair $(X, \ell_\infty)$ has neither the $V$-property nor the WMP. Notice that the argument in Theorem \ref{thm:main} also applies to the pair $(X, c)$. That is, $(X, c)$ does not have the CPP unless $X$ has finite dimension. 
Nevertheless, the pair $(X,c_0)$ has the WMP for any reflexive Banach space $X$ \cite[Proposition 3.6]{djm} while $c_0$ and $c$ are isomorphic. Thus, the {CPP} is not an \textit{isomorphic} property. 
\end{remark}

Now we prove that we can replace $\ell_\infty$ with a suitable reflexive Banach space $Y$ in Theorem \ref{thm:main}.

%\begin{theorem}\label{prop:DFJP}
%Let $X$ be an infinite-dimensional reflexive Banach space. Then there exists a reflexive Banach space $Y$ such that the pair $(X,Y)$ fails the CPP. 
%\end{theorem} 

\begin{proof}[Proof of Theorem \ref{maintheorem}]
	By Theorem \ref{thm:main}, there exist $T \in \mathcal{L} (X, \ell_\infty)$ and $K \in \mathcal{K} (X, \ell_\infty)$ such that $\|T + K \| > \|T\| = 1$ while $T+K$ does not attain its norm. Actually, $K$ can be chosen to be a rank-one operator. 
	Consider $\widetilde{T}$ and $\widetilde{K}$ in $\mathcal{L} (X, \ell_\infty \oplus_\infty X )$ given by 
	\[
	\widetilde{T} (x) = (T(x), x), \quad \widetilde{K} (x) = (K(x), 0)  \text{ for every } x \in X. 
	\]
	Note that $\| \widetilde{T} + \widetilde{K} \| = \|T + K \| > \|T \| = \| \widetilde{T} \|$, $\widetilde{K}$ is a rank one operator, and $\widetilde{T} + \widetilde{K}$ does not attain its norm. 
	Let $Z:= \overline{\text{span}}\{ \widetilde{T} (X) \cup \widetilde{K} (X) \} \subseteq \ell_\infty \oplus_\infty X$. Observe that $\widetilde{T}$ has a closed range which implies that $\widetilde{T} (X)$ is isomorphic to a quotient space of $X$; hence $\widetilde{T} (X)$ is reflexive. It follows that $Z= \overline{\text{span}}\{ \widetilde{T} (X) \cup \widetilde{K} (X) \} =\widetilde{T} (X) \oplus \widetilde{K} (X)$ is a reflexive Banach space since $\widetilde{T} (X)$ and $\widetilde{K} (X)$ are reflexive.
%	
%	
%	
%	Take $(z_n) \subseteq B_Z$ and consider $(u_n) \subseteq \widetilde{T}(X)$, $(v_n) \subseteq \widetilde{K} (X)$ in such a way that $\| z_n - (u_n +v_n) \| \rightarrow 0$ as $n \rightarrow \infty$. This, in particular, shows that $(u_n + v_n)$ is bounded. If we put $u_n = \widetilde{T} (x_n) = (T(x_n), x_n)$ and $v_n = \widetilde{K}(y_n) = (K(y_n), 0)$, the boundedness of $(u_n + v_n)$ implies that the sequence $(x_n)$ is bounded.
%	
%	Hence, passing to a subsequence, we may assume that $(x_n)$ converges weakly to some $x_0$ in $X$. Thus, $\widetilde{T} (x_n)$ converges weakly to $\widetilde{T}(x_0)$. Moreover, as $(u_n + v_n)$ and $(u_n)$ are bounded, $(v_n)$ is clearly bounded. Since $\widetilde{K}$ is a rank one operator, we can write $v_n = (K(y_n), 0) = (t_n v_0, 0)$ for some $v_0 \in \ell_\infty$ and $t_n \in \mathbb{R}$. The boundedness of $(v_n)$ implies that the sequence of scalars $(t_n)$ is bounded. Thus, we may assume, by passing to a subsequence if necessary, that $(v_n)$ is converging to some $(t_0 v_0, 0)$ in $\widetilde{K}(X) \subseteq \ell_\infty \oplus_\infty X$. Consequently, $(z_n)$ converges weakly to $\widetilde{T} (x_0) + t_0 v_0$ and this shows that $B_Z$ is weakly sequentially compact. 
%	
	Considering $\widetilde{T} + \widetilde{K}$ as an operator from $X$ into $Z$, we conclude that the pair $(X,Z)$ fails the CPP.
\end{proof}

\section{Interrelations between the WMP, $V$-property and CPP}\label{sec:relationproperties}

The aim of this section is to make an intensive study of the WMP, $V$-pairs and the CPP. 
As it was said in the introduction, it is known that the CPP is the weakest one among all the above properties. However, the CPP itself is still very restrictive, as the following two results shows.

It is worth mentioning that Ostrovskii \cite[Theorem 2]{O} proved that for any infinite-dimensional Banach space $X$, there exists a renorming $\widetilde{X}$ of $X$ such that in $\widetilde{X}$ there is a projection onto a subspace of codimension $1$ that does not attain its norm (in particular, with norm strictly bigger than one). From this, we can conclude that $\widetilde{X}$ does not have the CPP since there is a rank-one operator $Q$ on $\widetilde{X}$ such that $I_{\widetilde{X}} - Q$ does not attain its norm. He also proved that if $X$ is a Banach space with a symmetric basis and has the $V$-property, then $X$ is isometric to $\ell_p$ for some $1<p<\infty$ \cite[Theorem 3]{O}. In fact, his argument shows the following: if $X$ is a Banach space with a symmetric basis and has the CPP, then $X$ is isometric to $\ell_p$. We summarize these comments in the following proposition. 

\begin{proposition}\label{prop:Ost}
Let $X$ be an infinite-dimensional Banach space.
\begin{enumerate}
\itemsep0.3em
\item Then there is a renorming $\widetilde{X}$ of $X$ which does not have the CPP.
\item If $X$ has a symmetric basis and has the CPP, then $X$ is isometric to $\ell_p$ for some $1 < p <\infty$. 
\end{enumerate} 
\end{proposition}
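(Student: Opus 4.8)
The plan is to handle the two parts separately, in each case importing the relevant construction of Ostrovskii \cite{O} and recasting it in the language of the CPP.

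For part (1), the strategy is to quote \cite[Theorem 2]{O} directly: it produces, for the given infinite-dimensional $X$, an equivalent norm (call the renormed space $\widetilde{X}$) together with a projection $P \in \mathcal{L}(\widetilde{X})$ onto a subspace of codimension one with $\|P\| > 1$ and such that $P$ does not attain its norm. I would then set $Q := I_{\widetilde{X}} - P$. Since $P$ is a projection onto a codimension-one subspace, its complementary projection $Q$ has one-dimensional range $\ker P$, hence is a rank-one (in particular compact) operator. Writing $T := I_{\widetilde{X}}$ and $K := -Q$, one has $T + K = P$, so
\[
\|T + K\| = \|P\| > 1 = \|I_{\widetilde{X}}\| = \|T\|,
\]
while $T + K = P$ fails to attain its norm. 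As $K$ is compact, this exhibits a violation of the CPP for the pair $(\widetilde{X}, \widetilde{X})$, which proves (1).

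For part (2), the plan is to revisit the proof of \cite[Theorem 3]{O}, which reaches exactly the desired conclusion — isometry to $\ell_p$ for some $1 < p < \infty$ — under the formally stronger hypothesis that $X$, endowed with its symmetric basis, has the $V$-property. The key observation is that the $V$-property enters that argument only through the norm-attainment of certain compact (indeed low-rank) perturbations of operators adapted to the symmetric basis. For such perturbations the implication ``$\|T + K\| > \|T\|$ forces $T + K$ to attain its norm'' is precisely the content of the CPP, which is the operative consequence of the $V$-property via \cite[Proposition 5]{KOS}. I would therefore reproduce Ostrovskii's computation with the $V$-property replaced throughout by the CPP, checking at each invocation that the perturbation produced is compact so that the CPP applies; the symmetric structure then pins the homogeneity behaviour of the basis to that of $\ell_p$ and upgrades the conclusion from isomorphism to isometry. (Reflexivity forced by the CPP automatically rules out $c_0$ and $\ell_1$, consistent with the range $1<p<\infty$.)

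The hard part will be part (2): one must verify that Ostrovskii's original argument never exploits a genuine spectral-radius feature of $V$-operators beyond what the CPP already supplies. Because the $V$-property is defined through the spectral radius of products $TS$, a superficial reading might suggest that more than norm-attainment is used. The delicate bookkeeping is to confirm that every operator Ostrovskii tests is subjected only to a \emph{compact} perturbation, so that the reduction in \cite[Proposition 5]{KOS} collapses to the bare CPP and the contradiction he derives (a norm-increasing compact perturbation that fails to attain its norm whenever the space is not $\ell_p$) is already incompatible with the CPP alone. Once this verification is carried out, both parts follow without further difficulty.
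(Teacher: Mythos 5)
Your proposal is correct and follows essentially the same route as the paper: for (1) the paper likewise invokes \cite[Theorem 2]{O} and passes to the rank-one complementary projection $Q = I_{\widetilde{X}} - P$ so that $I_{\widetilde{X}} - Q = P$ is a norm-increasing compact perturbation of the identity that fails to attain its norm, and for (2) it makes exactly your observation that Ostrovskii's proof of \cite[Theorem 3]{O} uses the $V$-property only through the norm-attainment of compact perturbations, so the argument goes through verbatim under the CPP. The paper, like you, does not rewrite Ostrovskii's computation but simply records that inspection of his argument yields the stronger statement.
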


Another manifestation of the severe restriction that CPP on a pair $(X,Y)$ imposes on the spaces $X$ and $Y$ is that we can always find a pair of Banach spaces which fails to have the CPP from the existence of non-norm-attaining operators as follows. It is an analogue of \cite[Main Theorem]{djm}. 

\begin{proposition}\label{prop:djm}
Let $X$ and $Y$ be Banach spaces, and suppose that there exists a non-norm-attaining operator $T \in \mathcal{L}(X,Y)$. 
Then the pair $(X \oplus_p \mathbb{R}, Y \oplus_q \mathbb{R})$ fails to have the CPP whenever $1 \leq q < p \leq \infty$. 
%\begin{enumerate}
%\setlength\itemsep{0.3em}
%\item the pair $(X \oplus_p \mathbb{R}, Y \oplus_q \mathbb{R})$ fails to have the CPP 
%\end{enumerate} 
\end{proposition}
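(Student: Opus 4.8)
The plan is to explicitly construct a rank-one (or at least compact) perturbation witnessing the failure of the CPP for the pair $(X\oplus_p\mathbb R, Y\oplus_q\mathbb R)$. Let $T\in\mathcal L(X,Y)$ be the given non-norm-attaining operator, which we may normalize so that $\|T\|=1$. The key structural observation is that the hypothesis $q<p$ forces a strict gap in how norms interact across the two summands: on the domain side we measure $(x,s)$ by $\|(x,s)\|_p=(\|x\|^p+|s|^p)^{1/p}$, while on the target side we use the $q$-norm, and since $q<p$ the $q$-norm is \emph{larger} relative to the contribution of the scalar coordinate. I would exploit this by building an operator $S\in\mathcal L(X\oplus_p\mathbb R,\, Y\oplus_q\mathbb R)$ that sends $(x,s)\mapsto (Tx,0)$ together with a rank-one perturbation $R$ that activates the scalar coordinate, say $R(x,s)=(0,\lambda s)$ or a variant feeding the scalar input into the new target coordinate. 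The first step is thus to write down $S$ and $R$ precisely and compute $\|S\|$, $\|S+R\|$, choosing the scalar weight $\lambda$ so that $\|S+R\|>\|S\|$.

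The second step is to verify that $S+R$ fails to attain its norm, and this is where the non-attainment of $T$ is imported. The idea is that any maximizing sequence for $S+R$ must either load the original $X$-component (forcing a maximizing sequence for $T$, which by hypothesis has no maximizer in $B_X$) or load the scalar component; the role of the strict inequality $q<p$ is to guarantee that a \emph{genuine mixture} cannot do strictly better than either pure strategy, so the supremum is governed by the $T$-part and is therefore not attained. Concretely, I would show that for a norm-one input $(x,s)$ with $|s|$ bounded away from $0$, the $q$-summing on the target penalizes the configuration enough (relative to the $p$-normalization on the domain) that the value stays strictly below $\|S+R\|$; meanwhile pushing $|s|\to 0$ recovers the behavior of $T$ alone and inherits its non-attainment. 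Balancing these two regimes and checking that the crossover point is never an actual maximizer is the technical heart of the argument.

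The main obstacle I anticipate is precisely this balancing computation: one must choose the perturbation weight $\lambda$ (and possibly the exact form of $R$) so that simultaneously $\|S+R\|>\|S\|$ and no norm-one point attains $\|S+R\|$, and verifying the latter requires a careful case analysis splitting on whether the scalar coordinate of a putative maximizer is zero or nonzero. The inequality $q<p$ should enter through an explicit comparison of $(\cdot)^{p}$ versus $(\cdot)^{q}$ power means (for instance via strict convexity/Hölder-type estimates), and getting the strictness right — rather than merely a non-strict inequality — is the delicate point, since non-attainment is an open-type condition that can be destroyed by boundary cases. I would also remark that the construction should parallel the proof of \cite[Main Theorem]{djm}, so once the correct $\lambda$ and the power-mean comparison are pinned down, the non-attainment should follow from the non-attainment of $T$ by a limiting argument along any candidate maximizer, combined with the weak compactness afforded wherever reflexivity is available in the summands.
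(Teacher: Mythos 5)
Your construction is exactly the one in the paper: with $\|T\|=1$, the paper takes $\widetilde{T}(x,a)=(T(x),0)$ and $R(x,a)=(0,a)$, i.e.\ your $S$ and $R$ with $\lambda=1$. However, the mechanism you describe for both key steps is backwards, and as stated it would fail. You claim that, because $q<p$, ``a genuine mixture cannot do strictly better than either pure strategy, so the supremum is governed by the $T$-part,'' with non-attainment recovered by ``pushing $|s|\to 0$.'' The truth is the opposite: since $q<p$, mixing the two coordinates strictly \emph{beats} both pure strategies. Indeed, for $1\le q<p<\infty$,
\[
\|\widetilde{T}+R\|=\sup_{t\in[0,1]}\bigl((1-t^p)^{q/p}+t^q\bigr)^{1/q},
\]
and the function $f(t)=(1-t^p)^{q/p}+t^q$ satisfies $f(0)=f(1)=1$ while $f'(t)=q\,t^{q-1}-q\,t^{p-1}(1-t^p)^{q/p-1}>0$ for small $t>0$ (the term $t^{q-1}$ dominates $t^{p-1}$ precisely because $q<p$), so the supremum is $>1$ and is approached only at interior values $t^\ast\in(0,1)$ of the scalar coordinate. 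This interior gain is the \emph{only} reason $\|\widetilde{T}+R\|>\|\widetilde{T}\|$ holds; under your heuristic the norm would either not increase at all (if $\lambda\le 1$) or would equal $\lambda$ and be attained at the pure scalar vector $(0,1)$ (if $\lambda>1$), and in either case the proposition would not follow.

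Once the correct picture is in place, non-attainment is immediate and needs no limiting argument along $|s|\to 0$ nor any weak compactness: if $(x_0,s_0)$ were a norm-one maximizer, then $x_0\neq 0$ (otherwise the value is at most $1<\|\widetilde{T}+R\|$), but then $\|T(x_0)\|<\|x_0\|$ because $T$ does not attain its norm, whence
\[
\bigl(\|T(x_0)\|^q+|s_0|^q\bigr)^{1/q}<\bigl(\|x_0\|^q+|s_0|^q\bigr)^{1/q}\le\sup_{t\in[0,1]}\bigl((1-t^p)^{q/p}+t^q\bigr)^{1/q}=\|\widetilde{T}+R\|,
\]
a contradiction. (Maximizing sequences for $\widetilde{T}+R$ have the form $(x_n,t^\ast)$ with $(x_n/\|x_n\|)$ a maximizing sequence for $T$; their scalar coordinate stays bounded away from $0$, contrary to your description.) Note also that the paper treats $p=\infty$ separately, where $\|\widetilde{T}+R\|=2^{1/q}>1$, a case your sketch does not address.
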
 

\begin{proof}
Let $T \in \mathcal{L}(X,Y)$ be a non-norm-attaining operator with $\|T\| = 1$. Assume first that $1 \leq q < p < \infty$. Consider $\widetilde{T}, R \in \mathcal{L} (X \oplus_p \R, Y \oplus_q \R)$ given by $\widetilde{T}(x,a)= (T(x), 0)$ and $R(x,a)= (0, a)$, respectively. It is clear that $R$ is compact (indeed, of finite rank) and $\|\widetilde{T}\| = \|T\|$.
Moreover, by \cite[Lemma 2.1]{djm} and the argument of the proof of \cite[Main Theorem]{djm}, we have that 
\begin{align*}
\|\widetilde{T} + R\| = \sup \{ ( (1-t^p)^{q/p} + t^q )^{1/q} : t \in [0,1] \} > 1 = \|\widetilde{T}\|,
\end{align*}
where the hypothesis that $1 \leq q < p < \infty $ is used. However, it is not difficult to check that $\widetilde{T}+R$ is not norm-attaining; hence the pair $(X \oplus_p \mathbb{R}, Y \oplus_q \mathbb{R})$ fails to have the CPP. In the case $p = \infty$ we have that $\|\widetilde{T} + R\| = 2^{1/q} > 1 = \|\widetilde{T}\|$, so the conclusion holds. 
\end{proof}

\begin{remark}
Under the same hypothesis, it is observed in \cite[Main Theorem]{djm} that the pair $(X \oplus_\infty \mathbb{R}, Y)$ fails to have the WMP. However, we cannot expect the same result for the case of the CPP. In fact, there exists a non-norm-attaining operator from $X$ into $c_0$ whenever $X$ is an infinite-dimensional Banach space $X$ \cite[Lemma 2.2]{MMP}, while $(X,c_0)$ has the CPP for any reflexive Banach space $X$ \cite[Proposition 3.6]{djm}. 
\end{remark} 

\begin{remark}
Observe that the Schur property of a Banach space $Y$ implies that $(X,Y)$ has the CPP for every reflexive space $X$ (in fact, it has the WMP \cite[Theorem 3.5]{djm}). However, the converse is not true as the space $c_0$ does not have the Schur property. 
\end{remark} 

As an application of Proposition \ref{prop:djm}, the pair $(L_p [0,1], L_q [0,1])$ fails the CPP whenever $p>2$ or $q<2$. To verify this we need the following lemma, which is a version of \cite[Proposition 2.2]{djm} and \cite[Proposition 4]{KOS} for the CPP. 

\begin{lemma}\label{lem:basic}
Let $X$ and $Y$ be Banach spaces. Suppose that the pair $(X,Y)$ has the CPP. Then for any subspaces $X_1 \subseteq X$ and $Y_1 \subseteq Y$, the pair $(X/X_1, Y_1)$ has the CPP. 
\end{lemma}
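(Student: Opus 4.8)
Lemma~\ref{lem:basic} asserts that the CPP passes to quotients in the domain and subspaces in the codomain. The plan is to verify each of the two reductions separately, exploiting the obvious lifting and restriction maps together with the fact that taking quotients and subspaces interacts well with norms, compactness, and norm-attainment.

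First I would treat the codomain reduction, passing from $(X,Y)$ to $(X,Y_1)$ for a subspace $Y_1\subseteq Y$. Given operators $T,K\in\mathcal{L}(X,Y_1)$ with $K$ compact and $\|T+K\|>\|T\|$, I would regard them as operators into $Y$ via the isometric inclusion $\iota\colon Y_1\hookrightarrow Y$; this preserves all three relevant quantities, namely the operator norms $\|T\|$ and $\|T+K\|$, the compactness of $\iota\circ K$, and the property of attaining the norm (since $\|(\iota\circ T)(x)\|_Y=\|T(x)\|_{Y_1}$ for every $x$). The CPP of $(X,Y)$ then forces $\iota\circ(T+K)$ to attain its norm, and transporting back along the isometry shows $T+K$ attains its norm in $\mathcal{L}(X,Y_1)$. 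This half is essentially formal.

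Second I would treat the domain reduction, passing from $(X,Y)$ to $(X/X_1,Y)$. Let $q\colon X\to X/X_1$ denote the quotient map, and let $\widetilde{T},\widetilde{K}\in\mathcal{L}(X/X_1,Y)$ with $\widetilde{K}$ compact and $\|\widetilde{T}+\widetilde{K}\|>\|\widetilde{T}\|$. I would lift these by precomposition with $q$, setting $T:=\widetilde{T}\circ q$ and $K:=\widetilde{K}\circ q$. Since $q$ is a metric surjection (its open unit ball maps onto the open unit ball of $X/X_1$), precomposition preserves operator norms, so $\|T\|=\|\widetilde{T}\|$ and $\|T+K\|=\|\widetilde{T}+\widetilde{K}\|$; moreover $K$ is compact as a composition of a bounded operator with a compact one. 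The CPP of $(X,Y)$ yields some $x\in S_X$ with $\|(T+K)(x)\|=\|T+K\|$. The point to check is that this descends: I would argue that $\|(\widetilde{T}+\widetilde{K})(q(x))\|=\|(T+K)(x)\|=\|T+K\|=\|\widetilde{T}+\widetilde{K}\|$, with $q(x)\in B_{X/X_1}$, so $\widetilde{T}+\widetilde{K}$ attains its norm.

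The one genuinely delicate point, and the step I expect to require the most care, is the domain reduction where the witnessing vector $x$ may have $\|q(x)\|<1$ rather than lying on the unit sphere of $X/X_1$; if $q(x)=0$ this is vacuous, and otherwise one must confirm that normalizing $q(x)$ preserves the equality chain. Concretely, since $\|T+K\|=\|\widetilde{T}+\widetilde{K}\|$ and $\|(\widetilde T+\widetilde K)(q(x))\|=\|(T+K)(x)\|=\|T+K\|$ while $\|q(x)\|\leq\|x\|=1$, the only way to reconcile $\|(\widetilde T+\widetilde K)(q(x))\|=\|\widetilde T+\widetilde K\|$ with $\|q(x)\|\leq 1$ is to have $\|q(x)\|=1$, so $q(x)$ itself is the norm-attaining vector and no normalization issue arises. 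Finally, to obtain the full statement $(X/X_1,Y_1)$ I would simply compose the two reductions, applying the codomain step to pass from $(X,Y)$ to $(X,Y_1)$ and then the domain step to pass from $(X,Y_1)$ to $(X/X_1,Y_1)$.
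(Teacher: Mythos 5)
Your proposal is correct and follows essentially the same route as the paper: the paper conjugates in one step, considering $\iota T \pi + \iota K\pi \in \mathcal{L}(X,Y)$ with $\pi\colon X\to X/X_1$ the quotient map and $\iota\colon Y_1\hookrightarrow Y$ the inclusion, and notes exactly as you do that norms, compactness, and norm-attainment transfer, with the attaining vector for $T+K$ being $\pi(x_0)\in B_{X/X_1}$. Your extra verification that the image of the witnessing vector must have norm one is sound (and in fact not even needed, since the paper's definition of norm-attainment only requires a vector in the closed unit ball).
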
 

\begin{proof}
Let $\pi : X \rightarrow X/X_1$ be the canonical quotient map and $\iota$ be the inclusion from $Y_1$ into $Y$. Suppose that $T \in \mathcal{L}(X/X_1, Y_1)$ and $K \in \mathcal{K} (X/X_1, Y_1)$ satisfy that $\|T+K \| > \|T\|$. Then it is clear that $\| \iota T \pi + \iota K \pi \| = \|T+K \| > \|T\| = \| \iota T \pi \|$. By the assumption, the perturbation $\iota T \pi + \iota K \pi$ attains its norm at some $x_0 \in B_X$. From this, we have that $T+K$ attains its norm at $\pi(x_0)$; hence the pair $(X/X_1, Y_1)$ has the CPP. 
\end{proof} 

Arguing as in the proof of \cite[Theorem 3.2]{djm} with the aid of Lemma \ref{lem:basic}, we obtain the following desired result. 

\begin{corollary}\label{cor:LpLq}
If $p>2$ or $q<2$, then the pair $(L_p [0,1], L_q [0,1])$ fails the CPP. 
\end{corollary}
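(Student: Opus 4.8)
The plan is to reduce, via Lemma \ref{lem:basic}, the failure of the CPP for $(L_p[0,1],L_q[0,1])$ (with $1<p<\infty$, since otherwise $L_p$ is non-reflexive and the CPP fails trivially) to the failure of the CPP for a pair of the shape $(\ell_2\oplus_a\R,\ell_2\oplus_b\R)$ to which Proposition \ref{prop:djm} applies, pivoting the exponents at $2$. The structural input I will use is the classical fact that $\ell_2$ embeds isometrically into $L_r[0,1]$ for every $1\le r<\infty$ (the closed span of a sequence of independent standard Gaussian random variables is, after normalization, isometric to $\ell_2$), combined disjointly with one extra coordinate so that $\ell_2\oplus_r\R$ embeds isometrically into $L_r[0,1]$ for every such $r$ (put the Gaussians on $[0,1/2]$ and the extra coordinate on $[1/2,1]$, so that the ambient $L_r$-sum over disjoint supports is exactly the $\ell_r$-sum). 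Since $(\ell_2\oplus_p\R)^*=\ell_2\oplus_{p'}\R$ (with $1/p+1/p'=1$) embeds isometrically into $L_{p'}=(L_p)^*$, duality turns this into an isometric quotient: for a subspace $S\subseteq E^*$ of a reflexive space $E$ one has $E/S_\perp\cong S^*$ isometrically, so $\ell_2\oplus_p\R$ is isometric to a quotient of $L_p$. In particular (take the trivial extra coordinate) $\ell_2$ is isometric both to a subspace of every $L_q$ and to a quotient of every $L_p$.

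The two cases are symmetric. Suppose first $p>2$, and fix a non-norm-attaining operator on $\ell_2$, for instance the diagonal operator $\mathrm{diag}(1-1/n)$, whose norm $1$ is not attained. Proposition \ref{prop:djm}, applied with $X=Y=\ell_2$ and exponents $(\mathfrak p,\mathfrak q)=(p,2)$ (legitimate since $2<p$), shows that $(\ell_2\oplus_p\R,\ell_2)$ fails the CPP. By the previous paragraph $\ell_2\oplus_p\R$ is a quotient of $L_p$ and $\ell_2$ is a subspace of $L_q$, so were $(L_p,L_q)$ to have the CPP, Lemma \ref{lem:basic} would force $(\ell_2\oplus_p\R,\ell_2)$ to have it as well, a contradiction. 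Suppose now $q<2$; the same operator and Proposition \ref{prop:djm} with exponents $(\mathfrak p,\mathfrak q)=(2,q)$ (legitimate since $q<2$) show that $(\ell_2,\ell_2\oplus_q\R)$ fails the CPP, where this time $\ell_2$ is a quotient of $L_p$ and $\ell_2\oplus_q\R$ is a subspace of $L_q$; Lemma \ref{lem:basic} again gives a contradiction. In either case $(L_p,L_q)$ fails the CPP.

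The step that needs care — and the main obstacle — is the interplay between Proposition \ref{prop:djm} and Pitt's theorem. Proposition \ref{prop:djm} requires the codomain $\oplus$-exponent strictly below the domain $\oplus$-exponent, whereas Pitt's theorem forbids a non-norm-attaining operator $\ell_a\to\ell_b$ as soon as $a>b$ (every such operator is compact, hence attains its norm on the reflexive space $\ell_a$). If one tried to realise the quotient of $L_p$ and the subspace of $L_q$ as \emph{plain} $\ell$-spaces, the $\oplus$-exponent would be forced to coincide with the $\ell$-index, and the requirements ``$a\le b$'' (for a non-norm-attaining middle operator) and ``codomain exponent $<$ domain exponent'' would collide, leaving only the too-weak conclusion for $q<p$. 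The resolution implemented above is to \emph{decouple} the two rôles: the middle operator lives on the fixed space $\ell_2$, so $a=b=2$ and Pitt is harmless, while the $\oplus$-exponent is supplied independently by the ambient $L_r$-structure and is therefore free to be $p$ or $q$. This decoupling is exactly what the hypothesis $p>2$ or $q<2$ makes available, by pivoting at $2$. The only remaining facts are the routine isometric identities for $\ell_r$-sums, namely $(E_1\oplus_r E_2)^*=E_1^*\oplus_{r'}E_2^*$ and $(E_1\oplus_r E_2)/(F_1\oplus_r F_2)=(E_1/F_1)\oplus_r(E_2/F_2)$, which I will invoke without further comment.
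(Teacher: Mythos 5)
Your proof is correct and is essentially the argument the paper intends: the paper's one-line proof defers to \cite[Theorem 3.2]{djm} combined with Lemma \ref{lem:basic}, and that reduction is exactly your pivot-at-$2$ construction --- Gaussian copies of $\ell_2$ in $L_r$, duality realizing $\ell_2\oplus_p\mathbb{R}$ as an isometric quotient of $L_p$, Proposition \ref{prop:djm} applied to $(\ell_2\oplus_p\mathbb{R},\ell_2)$ (case $p>2$) resp.\ $(\ell_2,\ell_2\oplus_q\mathbb{R})$ (case $q<2$), and Lemma \ref{lem:basic} to transfer the failure to $(L_p[0,1],L_q[0,1])$. Your additional details (including the observation, via Pitt's theorem, that the non-norm-attaining operator must live on $\ell_2$ with the exponents supplied externally) simply make explicit what the paper leaves to the citation.
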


%\begin{proof}[Alternative proof for Proposition \ref{prop:DFJP}]
%Let $T \in \mathcal{L} (X, \ell_\infty)$, $\|T\| >1$, such that there exists a weakly non-null maximizing sequence for $T$ while $T$ does not attain its norm. Define $\widetilde{T}: X \longrightarrow \ell_\infty \oplus_\infty X$ by $\widetilde{T} (x) = (T(x), x)$. It is clear that $\widetilde{T}$ has a closed range and $\| \widetilde{T} \| = \|T\| >1$. Put $E := \widetilde{T} (X)$, which is isomorphic to $X / \ker (\widetilde{T})$; hence a reflexive subspace of $\ell_\infty$. Viewing $\widetilde{T}$ as an operator from $X$ to $E$, we conclude that the pair $(X,E)$ fails the WMP. Indeed, $\widetilde{T} \in \mathcal{L} (X,E)$ does not attain its norm and a weakly non-null maximizing sequence for $T$ is again a weakly non-null maximizing sequence for $\widetilde{T}$.
%\end{proof} 

%Note that for any infinite dimensional reflexive Banach space $X$ the pair $(X \oplus_\infty \mathbb{R}, c_0)$ fails the weak maximizing property. From this point, however, we cannot use the \emph{taking quotient technique} since every infinite dimensional closed subspace of $c_0$ (or $\ell_1$) is not reflexive due to Pe\l czynski. 

%The following is more powerful than Proposition \ref{prop:DFJP}.

Note that the pair $(\ell_p, \ell_q)$ with $1<p<q<\infty$ has the WMP (and therefore the CPP) while it is not a $V$-pair.
We do not know whether the reverse implication holds for every pair of Banach spaces:
\begin{question}
Does every $V$-pair have the WMP?
\end{question}

A natural idea to prove that every $V$-pair has the WMP would be to try to show that every $V$-operator with a non-weakly null maximizing sequence is norm-attaining. 
However, the following remark shows that there are  $V$-operators with non-weakly null maximizing sequences which do not attain their norm.

\begin{remark}
\label{RemarkVoperator}
Set $X=\ell_2 \oplus_\infty \R$. Take a non-norm-attaining operator $T \in \mathcal{L}(\ell_2)$ with $\|T\|=1$ and define $\widehat{T} \in \mathcal{L}(X, \ell_2)$ given by $\widehat{T}(x,a)=T(x)$. It is immediate that $\widehat{T}$ does not attain its norm (which is one) and that it has a non-weakly null maximizing sequence (take $((x_n,1))$ with $(x_n)$ a maximizing sequence for $T$). Nevertheless, we claim that $\widehat{T}$ is a $V$-operator. Since $T$ is a $V$-operator, there exists a norm-one operator $B \in \mathcal{L}( \ell_2)$ such that $TB-I$ is not bounded below.
Define $\widehat{B} \in \mathcal{L}( \ell_2, X)$ such that $\widehat{B}(x)=(B(x),0)$. It is immediate that $\widehat{T}\widehat{B}-I$ is not bounded below and therefore $\widehat{T}$ is a $V$-operator as desired.

Despite the fact that $\widehat{T}$ is a $V$-operator, Proposition \ref{prop:djm} implies that the pair $(X,\ell_2)$ does not have the CPP (so, it is not a $V$-pair). 
\end{remark}

In spite of the fact that the properties WMP and being a $V$-pair are not equivalent properties on a pair $(X,Y)$ of Banach spaces, they are closely related properties (as both imply, for instance, the CPP) and, because of that, it is expectable that some results which holds true for $V$-pairs can be translated to a WMP version and vice-versa. This is what we will do in the following proposition. 

Observe that, for each $1$-complemented subspace $X_1$ of $X$, the pair $(X_1, Y)$ has the WMP (resp., is a $V$-pair) whenever $(X,Y)$ has the WMP (resp., is a $V$-pair) (see \cite[Proposition 2.2]{djm}, resp., \cite[Proposition 4]{KOS}). 
However, it is unknown if the same remains true for any subspace of the domain space $X$. In this regard, the following result extends \cite[(c) of Corollary 3.8]{gp}, where the authors proved the WMP of a pair $(X,Y)$ for $X = (\sum_{n=1}^\infty E_n)_p$ and $Y = (\sum_{n=1}^\infty F_n)_q$, where $\text{dim}(E_n), \text{dim}(F_n) < \infty$, $1 < p< \infty$ and $1 \leq q < \infty$.

\begin{proposition}\label{prop:generalizationWMP}
Let $(E_n)$ and $(F_n)$ be sequences of finite-dimensional spaces. 
If $X \subseteq (\sum_{n=1}^\infty E_n)_p$ and $Y \subseteq (\sum_{n=1}^\infty F_n)_q$ for $1 < p< \infty$ and $1 \leq q < \infty$, then the pair $(X,Y)$ has the WMP. 
\end{proposition}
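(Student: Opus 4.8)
The plan is to argue directly inside the ambient sequence spaces. Write $Z_p:=(\sum_{n} E_n)_p$ and $Z_q:=(\sum_n F_n)_q$, so that $X\subseteq Z_p$ and $Y\subseteq Z_q$ isometrically, and recall that since $1<p<\infty$ the space $Z_p$, and hence $X$, is reflexive. Two ambient facts carry the argument. First, because each $E_n$ is finite-dimensional, a bounded sequence in $Z_p$ converges weakly if and only if it converges blockwise in norm; the same holds in $Z_q$. Second, this yields the asymptotic orthogonality
\[
\lim_n \|u+w_n\|^r=\|u\|^r+\lim_n\|w_n\|^r
\]
for any fixed $u$ and any weakly null $(w_n)$ (along a subsequence making $\|w_n\|$ converge), valid in $Z_p$ and $Z_q$ for the respective exponents $r\in\{p,q\}$. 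Now let $T\in\mathcal L(X,Y)$ have a non-weakly-null maximizing sequence $(x_n)$; normalize $\|T\|=1$. Since $X$ is reflexive I pass to a subsequence with $x_n\rightharpoonup x$, where $x\in X$ because $X$ is weakly closed, and $x\neq0$ because $(x_n)$ is not weakly null. Since $\|Tx_n\|\leq\|x_n\|\leq1$ and $\|Tx_n\|\to1$, also $\|x_n\|\to1$. I then split according to the relative size of $p$ and $q$.

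If $q<p$, I claim $T$ is automatically compact, so that reflexivity of $X$ forces $T$ to attain its norm and the maximizing hypothesis is not even used. This is Pitt's theorem in the setting of $\ell_p$-sums of finite-dimensional spaces; since the domain is only a subspace $X\subseteq Z_p$, I would reprove it by a gliding-hump argument rather than quote the classical statement. Concretely, if some weakly null $(w_n)\subseteq B_X$ had $\|Tw_n\|\geq\varepsilon>0$, then after passing to a subsequence and perturbing I may assume both $(w_n)$ in $Z_p$ and $(Tw_n)$ in $Z_q$ are almost blockwise disjoint; comparing
\[
\varepsilon N^{1/q}\lesssim \Big\|\sum_{j=1}^N Tw_{n_j}\Big\|_q=\Big\|T\sum_{j=1}^N w_{n_j}\Big\|_q\leq \Big\|\sum_{j=1}^N w_{n_j}\Big\|_p\lesssim N^{1/p}
\]
and letting $N\to\infty$ contradicts $1/q>1/p$. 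Hence $T$ maps weakly null sequences to norm-null ones, which for reflexive $X$ is equivalent to compactness, and a compact operator on a reflexive space attains its norm.

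If $q\geq p$, I use the weak limit $x$ directly. Put $w_n:=x_n-x\rightharpoonup0$ and set $a:=\|x\|$, $b:=\lim_n\|w_n\|$, $c:=\|Tx\|$, $d:=\lim_n\|Tw_n\|$, passing to a subsequence so all limits exist. Applying the asymptotic orthogonality in $Z_p$ to $(x_n)$ and in $Z_q$ to $Tx_n=Tx+Tw_n$ (here $Tw_n\rightharpoonup0$ by weak-weak continuity of $T$) gives $a^p+b^p=\lim_n\|x_n\|^p=1$ and $c^q+d^q=\lim_n\|Tx_n\|^q=1$. Since $\|Tx\|\leq\|x\|$ and $\|Tw_n\|\leq\|w_n\|$ we have $c\leq a$ and $d\leq b$; and because $q\geq p$ while $a^p+b^p=1$ forces $a,b\in[0,1]$, we get $a^q+b^q\leq a^p+b^p=1$. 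Chaining, $1=c^q+d^q\leq a^q+b^q\leq1$, so equality holds throughout, and strict monotonicity of $t\mapsto t^q$ gives $c=a$ and $d=b$. In particular $\|Tx\|=\|x\|=a>0$, so $x/\|x\|\in S_X$ witnesses $\|T(x/\|x\|)\|=1=\|T\|$, and $T$ attains its norm.

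The main obstacle, and the reason the statement does not follow formally from the known WMP of the full pair $((\sum E_n)_p,(\sum F_n)_q)$, is that the WMP is not known to pass to subspaces of the \emph{domain}. My plan sidesteps this by never leaving the ambient sequence spaces: both the orthogonality identity and the Pitt-type estimate refer only to the $\ell_p$- and $\ell_q$-block structure of $Z_p$ and $Z_q$, while the only roles played by $X$ and $Y$ are that they are closed (so $x\in X$ and $Tx\in Y$), that $X$ is reflexive (so the weak limit exists), and that their norms and weak topologies are inherited. The technical heart needing genuine care is the case $q<p$: justifying the \emph{simultaneous} almost-disjointification of $(w_n)$ in $Z_p$ and of $(Tw_n)$ in $Z_q$ by a gliding-hump argument, where the finite-dimensionality of the blocks $E_n,F_n$ is what makes blockwise compression possible.
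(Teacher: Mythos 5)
Your proposal is correct, and its overall architecture coincides with the paper's: both split into the cases $p\leq q$ and $q<p$, both handle $p\leq q$ via the asymptotic orthogonality of weakly null sequences against a fixed vector in $\ell_p$- and $\ell_q$-sums of finite-dimensional blocks, and both handle $q<p$ by showing every operator $X\to Y$ is compact and hence norm-attaining on the reflexive domain. In the first case your argument is essentially the paper's Case 1 in different clothing: the paper writes the orthogonality as $\|w_n+v\|=\|(\|w_n\|,\|v\|)\|_p+o(1)$ and compares the two-dimensional $p$- and $q$-norms via monotonicity in $r$, while you use the exact power identities $a^p+b^p=1$, $c^q+d^q=1$ and the chain $1=c^q+d^q\leq a^q+b^q\leq a^p+b^p=1$; the content is identical. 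The genuine divergence is Case 2: the paper quotes Milman's computation of the moduli of asymptotic uniform smoothness and convexity, $\overline{\rho}_X(t)\leq(1+t^p)^{1/p}-1<(1+t^q)^{1/q}-1\leq\overline{\delta}_Y(t)$, together with \cite[Proposition 2.3]{JLPS} to get compactness — a two-line argument in which the passage to subspaces is automatic from the monotonicity of the moduli — whereas you reprove the subspace Pitt theorem by a self-contained gliding hump. Your route buys elementarity at the cost of the one technical point you rightly flag: the disjointification can only be used for \emph{norm estimates in the ambient spaces}, since the truncated vectors need not lie in $X$ or $Y$; your displayed chain handles this correctly by applying $T$ only to the genuine sum $\sum_{j=1}^N w_{n_j}\in X$ and invoking almost-disjointness solely to bound $\bigl\|\sum_j w_{n_j}\bigr\|_p$ from above and $\bigl\|\sum_j Tw_{n_j}\bigr\|_q$ from below, but a full write-up must keep this distinction explicit. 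One minor imprecision: your claim that bounded sequences converge weakly \emph{iff} blockwise fails in the direction blockwise $\Rightarrow$ weak when $q=1$ (the $\ell_1$ basis is a counterexample); fortunately you only ever use the true direction weak $\Rightarrow$ blockwise, so nothing breaks.
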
 

\begin{proof}
The proof is motivated by the result of Ostrovskii \cite[Lemma 1]{O}. 
Let $T \in \Lin (X,Y)$, $\|T\| =1$ and let $(x_n) \subseteq S_X$ be a maximizing sequence for $T$. Suppose that $(x_n)$ converges weakly to $x_0 \neq 0$. We aim to show that $T$ attains its norm.

Case 1: $1<p\leq q < \infty$. For simplicity, consider the norm $\| \cdot \|_r$ on $\mathbb{R}^2$ given by $\|(a,b)\|_r := (|a|^r + |b|^r)^{1/r}$ for $1 < r< \infty$. Then 
\begin{enumerate}
\itemsep0.3em
\item For a weakly null sequence $(w_n)$ in $X$ and $v \in X$, we have 
\[
\| w_n + v \| = \| (\| w_n\| , \|v\| ) \|_p + o(1) 
\]
\item $\| (a,b)\|_r < \| (c,d)\|_r$ if $0\leq a \leq c$ and $0\leq b \leq d$, and at least one of the inequalities is strict. 
\end{enumerate} 
Put $w_n = x_n - x_0$. Then $(w_n)$ is weakly null (so, $(T(w_n))$ is weakly null in $Y$); hence 
\begin{align*}
\|T(x_n) \| = \| T(w_n) + T(x_0) \| &= \| ( \|T(w_n)\|, \|T(x_0)\| )\|_q + o(1) \\
&\leq \| ( \| w_n\|, \|T(x_0)\| )\|_q + o(1). 
\end{align*} 
On the other hand,
\[
\|T(x_n)\| = \| w_n + x_0 \| + o(1) = \| ( \|w_n\|, \|x_0\| ) \|_p + o(1). 
\]
Passing to a subsequence, $\|w_n\| \rightarrow \alpha$. Thus, we have 
\[
\| ( \alpha, \|x_0\| ) \|_p \leq \| ( \alpha , \|T(x_0)\| )\|_q \leq \| ( \alpha , \|T(x_0)\| )\|_p.
\]
This shows that $\|T(x_0) \| = \|x_0\|$. Since $x_0 \neq 0$, we conclude that $T$ attains its norm at $x_0$. 

Case 2: $1 \leq q < p < \infty$. Let us denote by $\overline{\delta}_Z (t)$ and $\overline{\rho}_Z (t)$ the modulus of AUC and AUS of a Banach space $Z$, respectively. Note from \cite[Theorem 3.1]{Milman} that 
\begin{align*}
\overline{\rho}_X (t) \leq \overline{\rho}_{ (\sum_{n=1}^\infty E_n)_p } (t) &= (1+t^p)^{1/p} - 1 \\
&< (1+t^q)^{1/q} - 1 = \overline{\delta}_{(\sum_{n=1}^\infty F_n)_q} (t) \leq \overline{\delta}_Y (t)
\end{align*} 
for $0<t<1$. By \cite[Proposition 2.3]{JLPS}, we conclude that every operator from $X$ into $Y$ is compact; hence $T$ attains its norm. 
\end{proof}

%One of goal is to find out what is the connection between the WMP and $V$-pairs.

As we already observed from Theorem \ref{thm:main} and Remark \ref{remark:c}, the pairs $(X, c)$ and $(X, \ell_\infty)$ cannot be $V$-pairs (since they fail to have the CPP). This is also covered by the following result since $c$ and $\ell_\infty$ (in general, $C(K)$-spaces) have the Dunford-Pettis property. Recall that a Banach space $X$ is said to have the \emph{Dunford-Pettis property} if every weakly compact operator from $X$ into any Banach space is completely continuous.

\begin{proposition}\label{prop:DPP}
Let $X$ be a Banach space and $Y$ a Banach space with Dunford-Pettis property. If the pair $(X,Y)$ is a $V$-pair, then every operator in $\mathcal{L}(X,Y)$ is norm-attaining.
\end{proposition}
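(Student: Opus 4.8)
The plan is to combine the defining property of a $V$-pair with the rigidity that the Dunford--Pettis property imposes on weakly compact operators. Let $T\in\mathcal{L}(X,Y)$ be arbitrary; since $(X,Y)$ is a $V$-pair, there is a norm-one operator $S\in\mathcal{L}(Y,X)$ such that the spectral radius of $TS$ equals $\|T\|$. The strategy is to show that this spectral condition, together with the Dunford--Pettis property of $Y$, forces $T$ to attain its norm, and the natural tool for doing so is exactly the criterion quoted earlier from \cite[Proposition 5]{KOS}: an operator with a strictly singular hump is a $V$-operator if and only if it is norm-attaining. Thus it would suffice to produce a strictly singular hump for $T$, or more directly to argue that under the DPP hypothesis the relevant spectral value of $TS$ is an eigenvalue realized on a finite-dimensional (hence norm-attaining) piece.

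First I would record the key consequence of the DPP. The composition $TS\in\mathcal{L}(Y)$ has spectral radius $r(TS)=\|T\|=:\lambda$. The idea is to show that a spectral value of modulus $\lambda$ on the boundary of the spectrum must be an eigenvalue with a corresponding eigenvector on which the norm is attained. The DPP enters here through the behaviour of $TS$ on weakly convergent sequences: if $(y_n)$ is a weakly null sequence in $B_Y$, then completely continuous operators map it to a norm-null sequence. I would exploit this to rule out the possibility that the supremum $\|T\|$ is approached only along weakly null directions --- in other words, to show that any maximizing sequence for $TS$ (equivalently, for $T$) cannot be weakly null, since otherwise the DPP would force $\|TS y_n\|\to 0$, contradicting $r(TS)=\lambda>0$. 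Once a maximizing sequence with a nonzero weak limit $x_0$ is secured, I would pass to $x_0$ and argue, as in the WMP-type computations of the previous section, that the norm is attained at (a normalization of) $x_0$.

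The key steps, in order, are: (i) extract from the $V$-operator hypothesis the operator $S$ and the spectral data $r(TS)=\|T\|$; (ii) use the DPP to show that $TS$ behaves like a compact-type operator on its maximizing directions, so that the spectral radius is attained as a genuine eigenvalue $\lambda$ with an eigenvector $y_0\in S_Y$ satisfying $TS y_0=\mu y_0$ with $|\mu|=\lambda$; (iii) transfer this to $T$ by setting $x_0:=S y_0/\|S y_0\|$ and checking, via the norm-one-ness of $S$ and the equality $\|TS y_0\|=\lambda\|y_0\|$, that $\|T x_0\|=\|T\|$. The chain of inequalities $\lambda=|\mu|\,\|y_0\|=\|TS y_0\|\le\|T\|\,\|S y_0\|\le\|T\|\,\|y_0\|=\lambda$ must collapse to equalities, which simultaneously yields $\|S y_0\|=\|y_0\|$ and $\|T(S y_0)\|=\|T\|\,\|S y_0\|$, i.e.\ norm attainment of $T$.

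The main obstacle I expect is step (ii): passing from \emph{spectral radius} equal to $\lambda$ to the existence of an honest eigenvector realizing it. The spectral radius is a limiting quantity $\lim_n\|(TS)^n\|^{1/n}$ and need not, in general, be an eigenvalue for a merely bounded operator; the whole point of invoking the DPP is to supply the missing compactness-like behaviour. The delicate part is to verify that $TS$ (or an appropriate power, or its restriction to the closed span of a maximizing sequence) is completely continuous, which requires checking that $S$ sends weakly convergent sequences to weakly convergent ones and that the DPP of $Y$ then kills the weakly null part. I would want to be careful that the ``hump'' argument of \cite[Proposition 5]{KOS} is applied to an operator to which it legitimately applies --- ensuring that the portion of $TS$ responsible for the spectral radius is strictly singular (equivalently, factors through the weakly null behaviour controlled by the DPP) is precisely where the argument must be made airtight.
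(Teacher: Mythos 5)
Your step (ii) is exactly where the proof lives, and your proposal leaves it open; moreover, the tools you suggest for closing it would not deliver. First, the Dunford--Pettis property of $Y$ does not directly make $TS$ ``compact-like'': the DPP says that \emph{weakly compact} operators \emph{out of} $Y$ are completely continuous, so to use it you need $S\colon Y \to X$ to be weakly compact, i.e.\ you need $X$ reflexive --- which is not among your hypotheses (the statement only assumes $X$ Banach) and must itself be extracted from the $V$-pair assumption (a $V$-pair has the CPP, and the CPP forces the domain to be reflexive). Second, ruling out weakly null maximizing sequences does not yield norm attainment for a general pair: that inference is precisely the WMP, which is not available here, and the ``WMP-type computations'' you invoke (as in Proposition \ref{prop:generalizationWMP}) rely on the special $\ell_p$-sum structure of the spaces there. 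Third, the strictly singular hump criterion of \cite[Proposition 5]{KOS} is inapplicable as you use it: you are handed a bare $T$ with no hump decomposition, and nothing in your argument produces one. Finally, the headline claim of (ii) --- that the DPP forces the spectral radius of $TS$ to be an eigenvalue --- is never established, and there is no reason $TS$ should be completely continuous a priori.

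The paper closes the gap by avoiding eigenvalues altogether and running a bootstrap on an \emph{approximate} fixed-point sequence. Normalize $\|T\|=1$; the $V$-operator hypothesis yields a norm-one $B \in \mathcal{L}(Y,X)$ and a sequence $(y_n) \subseteq S_Y$ with $\|(TB - I)(y_n)\| \to 0$ (the spectral radius condition places a point of modulus $1$ on the boundary of the spectrum of $TB$, hence in the approximate point spectrum; this is the same characterization used in Remark \ref{RemarkVoperator}). Since $X$ is reflexive, pass to a subsequence so that $B(y_n) \rightharpoonup x_0 \in B_X$; then $TB(y_n) \rightharpoonup T(x_0)$, and since $\|(TB-I)(y_n)\|\to 0$ also $y_n \rightharpoonup T(x_0)$. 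Reflexivity of $X$ makes $B$ weakly compact, so the DPP of $Y$ makes $B$ completely continuous, whence $B(y_n) \to BT(x_0)$ \emph{in norm}; applying $T$ and using $\|(TB-I)(y_n)\|\to 0$ once more gives $y_n \to TBT(x_0)$ in norm. Hence $\|TBT(x_0)\| = 1$ while $\|BT(x_0)\| \le 1$, so $T$ attains its norm at $BT(x_0)$. Your transfer step (iii) is sound as far as it goes --- and indeed the limit $y_0 := TBT(x_0)$ turns out to be an honest fixed point of $TB$, so the eigenvector you wanted does emerge --- but it emerges as the \emph{conclusion} of this weak-to-norm upgrade, not as an input you may assume.
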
 

\begin{proof}
Let $T \in \Lin (X,Y)$ with $\|T\| =1$. As $(X,Y)$ is a $V$-pair, there is $B \in \Lin (Y,X)$ with $\|B\| =1$ and a sequence $(y_n) \subseteq S_Y$ such that $\|(TB - I)(y_n) \| \rightarrow 0$. Since $X$ is reflexive, passing to a subsequence, we may assume that $(B(y_n))$ converges weakly to some $x_0 \in B_X$. 
It follows that $(TB(y_n))$ converges weakly to $T(x_0)$ in $Y$, which in turn implies that $(y_n)$ converges weakly to $T(x_0)$. As $Y$ has the Dunford-Pettis property, $B$ is completely continuous; hence $(B(y_n))$ converges in norm to $BT(x_0)$. 
Note then that $(TB(y_n))$ converges in norm to $TBT(x_0)$; hence, $(y_n)$ converges in norm to $TBT(x_0)$. This shows that $\|TBT(x_0)\| = 1$ and $T$ attains its norm at $x_0$.
\end{proof}

Unlike the WMP, it follows from Proposition \ref{prop:DPP} that for any infinite-dimensional Banach space $X$, the pair $(X, c_0)$ is not a $V$-pair, which stresses the fact that the implication ``WMP $\Rightarrow$ $V$-property" is false.

\bigskip

\noindent\textbf{Acknowledgment.} 
The research of this paper started during a stay of the authors in IMAC Castell\'on in June 2022. The authors are deeply grateful to the whole institute, and very specially to Sheldon Dantas, for the hospitality received during this stay.

Mingu Jung was supported by a KIAS Individual Grant (MG086601) at Korea Institute for Advanced Study. 
The last two authors were partially supported by Agencia Estatal de Investigación and EDRF/FEDER ``A way of making Europe" (MCIN/AEI/10.13039/501100011033) through grants PID2021-122126NB-C32 and PID2021-122126NB-C31 (Rueda Zoca). The research of Abraham Rueda Zoca was also supported by Junta de Andaluc\'ia  Grants FQM-0185 and PY20\_00255.

\end{document}